\newtheorem{theorem}{Theorem}[section]
\newtheorem{lemma}[theorem]{Lemma}
\newtheorem{corollary}[theorem]{Corollary}
\theoremstyle{definition}
\newtheorem{example}[theorem]{Example}
\newtheorem{question}[theorem]{Question}
\newtheorem{remark}[theorem]{Remark}
\newcommand{\ben}{\begin{enumerate}}
\newcommand{\een}{\end{enumerate}}
\theoremstyle{plain}
\newtheorem*{sol}{Solution}
\theoremstyle{definition}
\theoremstyle{remark}
\newcommand{\solu}[1]{\begin{sol}{\bf (\ref{#1})}}
\begin{document}

\title{A PI degree theorem for quantum deformations}

\author{Pavel Etingof}
\address{Department of Mathematics, Massachusetts Institute of Technology,
Cambridge, MA 02139, USA}
\email{etingof@math.mit.edu}

\maketitle 

\section{Introduction}

Let $F$ be an algebraically closed field. We show that if a quantum formal deformation $A$ of a commutative domain $A_0$ over $F$ is a PI algebra, then $A$ is commutative 
if ${\rm char}(F)=0$, and has PI degree a power of $p$ if ${\rm char}(F)=p>0$. 
This implies the same result for filtered deformations (i.e., filtered algebras $A$ such that ${\rm gr}(A)=A_0$). 

Note that a quantum formal deformation of a commutative domain $A_0$
may fail to be PI, even for finitely generated $A_0$ in characteristic $p$ (Example \ref{examp}(2)). However, we 
don't know if this is possible for filtered deformations. Thus we propose 

\begin{question}\label{que} Let ${\rm char}(F)=p>0$, and $A$ be a filtered deformation of a commutative finitely generated domain $A_0$ over $F$. Must $A$ be a PI algebra? In other words, must the division ring 
of quotients of $A$ be a central simple algebra? 
\end{question}

This question is closely related to the question asked in the introduction to \cite{CEW}, which would have affirmative answer if the answer to Question \ref{que} is affirmative. We don't know the answer to either of these questions even when $A_0$ is a polynomial algebra with generators in positive degrees. 
 
{\bf Acknowledgements.} The author is grateful to K. Brown, C. Walton and J. Zhang for useful discussions. The work of the author was partially supported by the NSF grant DMS-1502244. 

\section{Deformations of fields} 

Let $F$ be an algebraically closed field, and $A_0$ a field extension of $F$. 
Let $A$ be a quantum formal deformation of $A_0$ over $F[[\hbar]]$, i.e. 
an $F[[\hbar]]$-algebra isomorphic to $A_0[[\hbar]]$ as an $F[[\hbar]]$ module and equipped with an isomorphism of algebras $A/(\hbar)\cong A_0$
(for basics and notation on deformations, see \cite{EW}, Section 2).  

\begin{theorem}\label{PIdegr}  Suppose that $A$ is a PI algebra of degree $d$. 

(i) If ${\rm char}F=0$, then $d=1$ (i.e., $A$ is commutative). 

(ii) If ${\rm char}F=p>0$, then $d$ is a power of $p$. 
\end{theorem}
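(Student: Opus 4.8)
The plan is to exhibit $A$ as the valuation ring of a complete discrete valuation on a division algebra and to read off the possible values of $d$ from the structure theory of division algebras over complete discretely valued fields. First, $A$ is a domain: since $A\cong A_0[[\hbar]]$ as a module it is $\hbar$-torsion-free and $\hbar$-adically separated, so if $ab=0$ with $a,b\neq 0$, factoring out the exact power of $\hbar$ from $a$ and from $b$ and reducing modulo $\hbar$ would give a product of two nonzero elements of the field $A_0$ equal to zero, which is absurd. A PI domain is Ore, so $A$ has a division ring of fractions $Q$, which is PI of degree $d$; by Posner--Rowen $K:=Z(Q)$ satisfies $[Q:K]=d^2$.

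Because $A/\hbar A=A_0$ is a field and $A$ is $\hbar$-adically complete, $A$ is a local ring with maximal ideal $\hbar A$ in which every element with nonzero reduction is a unit. Hence every nonzero element of $Q$ can be written $\hbar^r u$ with $r\in\ZZ$, $u\in A^{\times}$, and $w(\hbar^r u):=r$ is a well-defined discrete valuation of the division ring $Q$ whose valuation ring is exactly $A$ and whose residue division ring is the field $A_0$ --- commutative. Its restriction to $K$ is a discrete valuation with valuation ring $Z(Q)\cap A=Z(A)$, which is closed in $A$, hence complete; so $K$ is a complete discretely valued field with residue field $Z_0:=Z(A)/\hbar Z(A)\subseteq A_0$. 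It is equicharacteristic, so $K\cong Z_0((\hbar))$ by Cohen's theorem, and since $\hbar\in K$ the value groups of $Q$ and of $K$ coincide (both $\cong\ZZ$): the valuation is unramified, $e(Q/K)=1$.

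Now write $Q\cong\bigotimes_\ell Q_\ell$ for the primary decomposition in $\mathrm{Br}(K)$, where $Q_\ell$ is a division algebra of $\ell$-power degree and, the degrees being pairwise coprime, each $Q_\ell$ occurs as a division subalgebra of $Q$. It suffices to prove $Q_\ell=K$ for every prime $\ell$ different from $p:=\mathrm{char}(A_0)$ (which is every prime if $\mathrm{char}F=0$), for then $d=\prod_\ell\deg Q_\ell$ equals $1$ if $\mathrm{char}F=0$ and a power of $p$ if $\mathrm{char}F=p$. Restricting $w$, such a $Q_\ell$ is a central division algebra over the complete field $K$ with $e(Q_\ell/K)=1$, with commutative residue ring $\overline{Q_\ell}\subseteq A_0$, and with degree prime to the residue characteristic. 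A maximal subfield $L\subseteq Q_\ell$ then has $[L:K]=\deg Q_\ell$, carries the restricted valuation with $e(L/K)=1$, and --- since $[L:K]$ is prime to the residue characteristic there is neither defect nor inseparability in the residue extension --- is unramified over $K$. Therefore $[Q_\ell]$ is split by an unramified extension, so it lies in $\mathrm{Br}_{\mathrm{nr}}(K)=\mathrm{Br}(\mathcal{O}_K)\cong\mathrm{Br}(Z_0)$ (reduction of Azumaya algebras over the Henselian local ring $\mathcal{O}_K$); hence $Q_\ell$ is inertial, its residue ring $\overline{Q_\ell}$ is a central division algebra over $Z_0$ of dimension $(\deg Q_\ell)^2$, and since $\overline{Q_\ell}$ is commutative this forces $\deg Q_\ell=1$.

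I expect the main obstacle to be making this valuation-theoretic input precise and available: that $w$ really is a valuation of the noncommutative ring $Q$ with valuation ring $A$, that maximal subfields of a division algebra of degree prime to the residue characteristic are unramified, and that the unramified Brauer group of a complete discretely valued field is the Brauer group of its residue field (equivalently, Ostrowski's theorem for valued division algebras). Once this is granted, the reduction of $A$ to a ``noncommutative discrete valuation ring'' and the passage to primary components are routine.
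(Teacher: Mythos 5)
Your reduction to valuation theory is sound and genuinely different from the paper's argument: you correctly identify $Q=A[\hbar^{-1}]$ as a division ring carrying a complete discrete valuation $w$ with valuation ring $A$, commutative residue $A_0$, and $e(Q/K)=1$ over the complete center $K=C[\hbar^{-1}]$. (The paper instead works with the descending chain of subfields $A_r\subset A_0$ of elements liftable to central elements of $A/(\hbar^{r+1})$, kills the induced Poisson bracket by a derivation argument in characteristic $0$, and shows $A_{r+1}\supset A_r^p$ in characteristic $p$ --- entirely elementary.) However, your final step contains a genuine error: being split by an unramified field extension does \emph{not} place a Brauer class in $\mathrm{Br}(\mathcal{O}_K)\cong\mathrm{Br}(\overline{K})$. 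Over $K=\mathbb{Q}_p$ every central division algebra is split by an unramified extension, yet $\mathrm{Br}(\mathbb{F}_p)=0$ while $\mathrm{Br}(\mathbb{Q}_p)=\mathbb{Q}/\mathbb{Z}$; by Witt's theorem the group of classes split by unramified extensions sits in an exact sequence whose quotient by $\mathrm{Br}(\overline{K})$ is a character group of $\mathrm{Gal}(\overline{K}^{\,sep}/\overline{K})$, and since $C_0$ (e.g.\ $F(x^p,y^p)\subset F(x,y)$ in the Weyl algebra example) is far from separably closed, that quotient need not vanish. So ``split by an unramified maximal subfield'' does not by itself force $Q_\ell$ to be inertial: the nonsplit division algebras over $\mathbb{Q}_p$ are exactly of this kind, with $e=f=n$.

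The gap is repairable, but by a different input --- one you record and then do not use, namely $e(Q_\ell/K)=1$ for the whole division algebra, not just for a maximal subfield. For a valued division algebra $D$ over a Henselian field there is a canonical surjection $\Gamma_D/\Gamma_K\twoheadrightarrow\mathrm{Gal}(Z(\overline{D})/\overline{K})$ (Ershov; Jacob--Wadsworth), so $e(Q_\ell/K)=1$ together with tameness ($\ell\neq\mathrm{char}\,C_0$) forces $Z(\overline{Q_\ell})=C_0$; defectlessness of complete discretely valued fields then gives $[\overline{Q_\ell}:C_0]=[Q_\ell:K]=(\deg Q_\ell)^2$ with $\overline{Q_\ell}$ a \emph{central} division algebra over $C_0$, and commutativity of $\overline{Q_\ell}\subseteq A_0$ forces $\deg Q_\ell=1$. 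With this substitution your proof goes through and yields both (i) and (ii), at the cost of importing the structure theory of tame valued division algebras where the paper needs only the Leibniz rule for the Poisson bracket and the identity $[a^p,b]=\sum_{i=0}^{p-1}[a^i,[a,b]]\,a^{p-1-i}$ in characteristic $p$.
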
 

\begin{proof}
Let $C$ be the center of $A$. It is easy to see that the division algebra of quotients of $A$ is $A[\hbar^{-1}]$ with center $C[\hbar^{-1}]$
(see \cite{EW}, Example 2.7). Moreover, by Posner's theorem (\cite{MR}, 13.6.5), $A[\hbar^{-1}]$ is a central division algebra over $C[\hbar^{-1}]$ of degree $d$, so $[A[\hbar^{-1}]:C[\hbar^{-1}]]=d^2$.  

Let $C_0=C/(\hbar)$. It is clear that $C_0$ is a subfield of $A_0$, and $C$ is a (commutative) formal deformation of $C_0$. 

\begin{lemma}\label{alge} $[A_0:C_0]=d^2$. 
\end{lemma} 

\begin{proof} Let $a_1^0,...,a_m^0\in A_0$ be linearly independent over $C_0$. 
Let $a_1,...,a_m$ be lifts of these elements to $A$. Then $a_1,...,a_m$ are linearly independent over $C$ and hence over $C[\hbar^{-1}]$. Thus $m\le d^2$. Moreover, if 
$a_1^0,...,a_m^0$ are a basis of $A_0$ over $C_0$ then $a_1,...,a_m$ 
are a free basis of $A$ over $C$ and hence a basis of $A[\hbar^{-1}]$ over $C[\hbar^{-1}]$, so 
$m=d^2$. 
\end{proof} 

Now for every integer $r\ge 0$, let $A_r\subset A_0$ be the field of all $x\in A_0$ 
which admit a lift to a central element of $A/(\hbar^{r+1})$. Note that $A_r\supset A_{r+1}$, and by Lemma \ref{alge}, this is a finite field extension.
 
Let us now prove (i). Assume the contrary, i.e. that $A$ is noncommutative. 
Let $r$ be the largest integer such that $[a,b]\in \hbar^rA$ for all $a,b\in A$. 
Then we have a nonzero Poisson bracket on $A_0$ given by $\lbrace{a_0,b_0\rbrace}=\hbar^{-r}[a,b]\text{ mod }\hbar$, where $a,b$ are any lifts of $a_0,b_0$ to $A$. 
Moreover, by definition $\lbrace,\rbrace$ is bilinear over $A_r$. 
Recall that $\lbrace,\rbrace$ is a derivation in each argument, and that any $K$-linear derivation
of a finite extension of a field $K$ 
of characteristic zero vanishes. Since $[A_0:A_r]<\infty$, this implies that 
$\lbrace,\rbrace=0$, a contradiction. This proves (i). 

We now prove (ii). 

\begin{lemma}\label{intersec} For large enough $r$,    
$A_r=C_0$. 
\end{lemma} 

\begin{proof}
For nonnegative integers $r\ge s$, let $C_{r,s}\subset  A/(\hbar^{s+1})$ be the set of elements liftable to a central element of 
$A/(\hbar^{r+1})$. It is clear that $C_{s,s}$ is the center of $A/(\hbar^{s+1})$, $C_{r,s}\supset C_{r+1,s}$, and $C_{r,s-1}$ is a quotient of $C_{r,s}$. Also $C_{r,s}$ is a $C_0/(\hbar^{s+1})$-submodule of
$A/(\hbar^{s+1})$. Let $C_{\infty,s}$ 
be the intersection of $C_{r,s}$ over all $r$. By Lemma \ref{alge}, 
$A/(\hbar^{s+1})$ has finite length as a $C_0/(\hbar^{s+1})$-module, 
so $C_{\infty,s}=C_{r(s),s}$ for a suitable $r(s)$. This implies that the natural map 
$C_{\infty,s}\to C_{\infty,s-1}$ is surjective (as it coincides with the map 
$C_{r,s}\to C_{r,s-1}$ for a suitable $r$). Let $C_{\infty,\infty}=\underleftarrow{\lim}C_{\infty,s}\subset A$. 

We claim that any element $a\in C_{\infty,\infty}$ is central in $A$. Indeed, $a$ projects to $a_s\in C_{\infty,s}\subset C_{s,s}$ which is central in 
$A/(\hbar^{s+1})$. Hence for any $b\in A$ we have $[a,b]=O(\hbar^{s+1})$. 
Since this holds for all $s$, we get that $[a,b]=0$. 

This implies that $C_{\infty,\infty}=C$ (as $C_{\infty,\infty}$ clearly contains $C$). Hence $C_{\infty,s}=C/(\hbar^{s+1})$ and in particular $C_{\infty,0}=C_0$. 
Hence $C_{r,0}=C_0$ for a large enough $r$. But by definition $C_{r,0}=A_r$, which implies the lemma. 
\end{proof}

\begin{lemma}\label{ppow} For all $r\ge 0$, one has $A_{r+1}\supset A_r^p$.  
\end{lemma} 

\begin{proof} Let $a_0\in A_r$, and $a$ be its lift to $A$ central modulo $\hbar^{r+1}$. Let $b\in A$. 
We have 
$$
[a^p,b]=\sum_{i=0}^{p-1} a^i[a,b]a^{p-1-i}=p[a,b]a^{p-1}+ \sum_{i=0}^{p-1} [a^i,[a,b]]a^{p-1-i}=
$$
$$
\sum_{i=0}^{p-1} [a^i,[a,b]]a^{p-1-i}
$$
(as we are in characteristic $p$). We have $[a,b]\in \hbar^{r+1} A$, hence $[a^i,[a,b]]\in \hbar^{r+2}A$. 
Thus $a^p\in A_{r+1}$.
\end{proof} 

Lemma \ref{ppow} implies that $A_r$ is a purely inseparable extension of $A_{r+1}$. In particular, 
$[A_r: A_{r+1}]$ is a power of $p$. 
Since by Lemma \ref{intersec} $A_r=C_0$ for large $r$, this implies that  $[A_0:C_0]$, and hence $d$, is  a power of $p$, as desired.  
\end{proof} 

\begin{remark} Here is another proof of Theorem \ref{PIdegr} (which deviates from the above proof after Lemma \ref{alge}). By Lemma \ref{alge}, 
it suffices to show that $A_0$ is a purely inseparable extension of $C_0$ 
(in particular, $A_0=C_0$ in characteristic zero). To this end, consider the algebra $B:= A\otimes_C A^{\rm op}$. Since $A[\hbar^{-1}]$ is a central division algebra of degree $d$ over $C[\hbar^{-1}]$, we have $B[\hbar^{-1}]\cong {\rm Mat}_d(C[\hbar^{-1}])$, hence $B$ does not contain nontrivial central idempotents. Therefore, the same holds for $B_0:=B/(\hbar)$ (otherwise 
we would have a nontrivial decomposition $B_0=B_0'\oplus B_0''$, which would lift to a decomposition $B=B'\oplus B''$, and $1_{B'}$ would be a nontrivial central idempotent in $B$). But 
$B_0=A_0\otimes_{C_0}A_0$. Hence $B_0$ has no nontrivial idempotents (i.e., is local). 
Let $x\in A_0$ be a separable element over $C_0$ and $K:=C_0[x]\subset A_0$. Then $K\otimes_{C_0}K\subset A_0\otimes_{C_0}A_0$ is reduced and projects onto $K$, hence 
contains nontrivial idempotents unless $K=C_0$. Hence $x\in C_0$, and $A_0$ is purely inseparable over $C_0$, as desired. 
\end{remark} 

\section{Deformations of domains}

Let us now extend Theorem \ref{PIdegr} to deformations of domains.

\begin{theorem}\label{PIdegr1} 
Theorem \ref{PIdegr} holds more generally, if $A_0$ is a domain over $F$.  
\end{theorem}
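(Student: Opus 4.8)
The strategy is to reduce the domain case to the already-proven field case by passing to the ring of fractions. Let $A$ be a quantum formal deformation of a commutative domain $A_0$ over $F$, and suppose $A$ is PI of degree $d$. First I would form the localization $\widetilde{A} := A[S^{-1}]$ where $S$ is the multiplicative set of all nonzero elements of $A$ that are \emph{central} (or, more carefully, one localizes at a suitable Ore set lifting $A_0 \setminus \{0\}$). The key point is that $A_0$ is a domain, so $\hbar$ is a nonzerodivisor in $A$ and $A$ is itself a domain; by Posner's theorem $A$ has a ring of fractions which is a central division algebra $\widetilde{A}$ over its center $\widetilde{C}$, of the same PI degree $d$. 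One then wants to realize $\widetilde{A}$ — or rather a suitable $\hbar$-adic model of it — as a quantum formal deformation of the field of fractions $E_0 := \mathrm{Frac}(A_0)$, at which point Theorem \ref{PIdegr} applies directly and yields that $d=1$ in characteristic $0$ and $d$ is a power of $p$ in characteristic $p$.

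The technical heart is producing a flat $F[[\hbar]]$-model: I would take the localization $\widehat{A}$ of $A$ at the Ore set of lifts of $A_0\setminus\{0\}$, check that this set is indeed an Ore set (using that reduction mod $\hbar$ lands in the domain $A_0$, so every such lift is a nonzerodivisor and the Ore condition holds by a standard $\hbar$-adic successive-approximation argument), and verify that $\widehat{A}$ is $\hbar$-adically separated and complete with $\widehat{A}/(\hbar) \cong E_0$ and $\widehat{A}$ flat (hence free) over $F[[\hbar]]$. Thus $\widehat{A}$ is a quantum formal deformation of the field $E_0$. It is a PI algebra of degree $d$, since localization does not increase PI degree and $\widehat{A} \supseteq A$ forces it to be exactly $d$ (the ring of fractions of $\widehat{A}$ is the same $\widetilde{A}$ as that of $A$). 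Now Theorem \ref{PIdegr}, applied to $\widehat{A}$ over $E_0$, gives the conclusion.

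The main obstacle I anticipate is the Ore condition and flatness check: one must be sure that localizing $A$ at lifts of $A_0 \setminus \{0\}$ behaves well $\hbar$-adically — i.e., that no collapse occurs and that $\widehat{A}$ remains $\hbar$-torsion-free with the correct reduction. This is where the domain hypothesis on $A_0$ is essential (it guarantees $\hbar$ stays prime and the relevant elements are regular), and where a careful inverse-limit / completion argument is needed rather than naive localization. Alternatively, and perhaps more cleanly, one can avoid completion issues by working with the second proof of Theorem \ref{PIdegr} from the Remark: set $B := A \otimes_C A^{\mathrm{op}}$, note $B[\hbar^{-1}] \cong \mathrm{Mat}_d(\widetilde{C})$ so $B$ has no nontrivial central idempotents, hence neither does $B_0 := B/(\hbar)$; identify $B_0$ with a localization of $A_0 \otimes_{C_0} A_0$ (using $C_0 := C/(\hbar) \subseteq A_0$); and conclude as before that $\mathrm{Frac}(A_0)$ is purely inseparable over $\mathrm{Frac}(C_0)$, with the degree of this extension equal to $d^2$ by the domain analogue of Lemma \ref{alge}. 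Either route reduces the theorem to commutative algebra over the fraction field, which is the content of the field case.
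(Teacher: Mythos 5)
Your first route is essentially the paper's own proof: the paper forms $Q(A)=\underleftarrow{\lim}\,Q(A/(\hbar^{N+1}))$, a formal quantum deformation of $Q(A_0)$, notes that the quotient division algebra $Q_*(A)$ of $A$ is $\hbar$-adically dense in $Q(A)[\hbar^{-1}]$ so the PI degree $d$ is preserved (exactly the density/same-identities point you flag as the technical heart, including your correct observation that the completed localization need not equal the naive one), and then applies the field case. Your alternative second route via $B=A\otimes_C A^{\mathrm{op}}$ is a reasonable variant of the paper's Remark, but the main argument you propose matches the paper's.
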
 

\begin{proof} Following \cite{EW}, Subsection 2.2, let 
$Q(A)=\underleftarrow{\lim}Q(A/(\hbar^{N+1}))$, where   $Q(A/(\hbar^{N+1}))$ is the classical quotient ring of $A/(\hbar^{N+1})$.\footnote{The characteristic zero assumption of \cite{EW} is not used in these considerations.}  Also, let $Q_*(A)\subset Q(A)[\hbar^{-1}]$ be the quotient division algebra of $A$ (which exists since $A$ is a PI domain). Then $Q_*(A)$ is dense in $Q(A)[\hbar^{-1}]$ in the $\hbar$-adic topology 
(although in general $Q_*(A)\ne Q(A)$), and hence satisfies the same polynomial identities as $Q(A)[\hbar^{-1}]$. By Posner's theorem, $Q_*(A)$ is a central division algebra of degree $d$, hence so is $Q(A)[\hbar^{-1}]$ (as it is a division algebra satisfying the identities of $d\times d$ matrices but not matrices of smaller size). Also, $Q(A)$ is a formal quantum 
deformation of the quotient field $Q(A_0)$ of $A_0$, which is a field 
extension of $F$. Thus, Theorem \ref{PIdegr} applies to $Q(A)$, and the theorem is proved. 
\end{proof} 

\begin{corollary}\label{PIdegr2} 
Let $A$ be a $\Bbb Z_+$-filtered deformation of 
a commutative domain $A_0$ over $F$ (i.e., ${\rm gr}(A)=A_0$). 
Suppose that $A$ is a PI algebra of degree $d$. 

(i) If ${\rm char}F=0$, then $d=1$ (i.e., $A$ is commutative). 

(ii) If ${\rm char}F=p>0$ then $d$ is a power of $p$. 
\end{corollary}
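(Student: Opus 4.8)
The plan is to reduce Corollary \ref{PIdegr2} to Theorem \ref{PIdegr1} by passing from the filtered algebra $A$ to the associated Rees construction, turning the filtration into a formal deformation parameter. First I would introduce the Rees algebra $R = \bigoplus_{n\ge 0} F^n(A)\, t^n$ (or, more precisely, its $\hbar$-adic completion), where $F^n(A)$ is the $n$-th filtered piece of $A$; here $t$ is a central variable. The key algebraic facts are standard: $R/(t) \cong \gr(A) = A_0$, the element $t$ is a nonzerodivisor, $R[t^{-1}] \cong A[t^{-1}]$, and the completion $\widehat{R}$ along $t$ is isomorphic as an $F[[t]]$-module to $A_0[[t]]$ (using that $A_0 = \gr(A)$ is the associated graded). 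Thus $\widehat{R}$ is a quantum formal deformation of the commutative domain $A_0$ over $F[[t]]$ in the sense of Section 2.

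Next I would verify that $\widehat{R}$ is a PI algebra of the same PI degree $d$ as $A$. Since $R[t^{-1}] \cong A[t^{-1}]$ and $A$ embeds in $A[t^{-1}]$, the algebra $R$ (hence any subalgebra of $\widehat R$ that is dense $t$-adically, and in particular $R$ itself) satisfies all polynomial identities of $A$; conversely $A$ embeds in $\widehat R$ via $a \mapsto a t^{\deg a}$-type maps, or more cleanly one notes that the quotient division ring of $\widehat R$ contains (a copy of) that of $A$ and is contained in $A[t^{-1}]$ up to completion, so the PI degrees agree. Concretely, it suffices to observe that $A$ is a PI domain of PI degree $d$, that $R$ and $A$ have isomorphic localizations at $t$, and that PI degree is detected on the localization; one then applies the density argument already used in the proof of Theorem \ref{PIdegr1} to conclude $\widehat R$ has PI degree $d$ as well. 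Once this is in hand, Theorem \ref{PIdegr1} applied to the deformation $\widehat R$ of $A_0$ gives: in characteristic zero $d = 1$, and in characteristic $p$ the integer $d$ is a power of $p$.

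Finally I would translate the conclusion back to $A$. In characteristic zero, $d=1$ means the quotient division ring of $\widehat R$ is commutative, hence so is $R[t^{-1}] = A[t^{-1}]$, hence $A$ is commutative. In characteristic $p$ there is nothing further to do, since $d$ is the PI degree of $A$ directly. The main obstacle I expect is the bookkeeping in the second paragraph: making precise the comparison of PI degrees between $A$, the Rees algebra $R$, and its completion $\widehat R$, and checking that the $\hbar$-adic completion step does not change the PI degree — this is exactly the kind of density argument invoked in the proof of Theorem \ref{PIdegr1}, so the same reasoning (a division ring satisfying the identities of $d\times d$ matrices but not of smaller matrices) applies, but one must be careful that $\widehat R$ is genuinely flat over $F[[t]]$ and free of the right rank, which is where the hypothesis $\gr(A) = A_0$ (rather than merely $\gr(A)$ being a domain) is used.
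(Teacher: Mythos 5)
Your proposal is correct and follows essentially the same route as the paper: pass to the (completed) Rees algebra, use $t$-adic density of the Rees algebra in its completion to see that the set of polynomial identities (hence the PI degree $d$) is preserved, and then apply Theorem \ref{PIdegr1} to the resulting formal quantum deformation of $A_0$. The only cosmetic slip is that $R[t^{-1}]\cong A[t,t^{-1}]$ rather than $A[t^{-1}]$, which does not affect the comparison of identities.
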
 

\begin{proof} Let $R(A)$ be the Rees algebra of $R$ and $\widehat{R}(A)$ the completed Rees algebra of $A$ (see e.g. \cite{EW}, Subsection 2.1). 
Then $R(A)$ satisfies the identities of matrices of size $d\times d$ but not smaller (since so does $A$).   
Since $R(A)$ is dense in $\widehat{R}(A)$ in the $\hbar$-adic topology, the same holds for 
$\widehat{R}(A)$. But $\widehat{R}(A)$ is a formal quantum deformation of $A$. 
Thus Theorem \ref{PIdegr1} implies the result. 
\end{proof} 

\begin{example}\label{examp} 1. Suppose ${\rm char} F=p>0$. Let $A$ be the formal $n$-th Weyl algebra, i.e. 
the $\hbar$-adically complete algebra over $F[[\hbar]]$ generated by 
$x_1,...,x_n,y_1,...,y_n$ with defining relations 
$$
[x_i,x_j]=[y_i,y_j]=0,\ [y_i,x_j]=\hbar \delta_{ij}.
$$
Then $A$ is a formal deformation of $A_0:=F[x_1,...,x_n,y_1,...,y_n]$, 
which is the completed Rees algebra of its filtered deformation (the usual Weyl algebra $\bold A_n(F)$). 
The center $A$ is $C=F[x_1^p,...,x_n^p,y_1^p,...,y_n^p][[\hbar]]$, so 
$A$ is PI of degree $d=p^n$. Note that if we have infinitely many generators $x_i,y_i$ 
then $A$ is not PI, so the ``finitely generated" assumption in Question \ref{que} is needed. 

2. A formal quantum deformation of a finitely generated commutative domain does not have to be PI, even in characteristic $p$. E.g., let $A$ be the formal quantum polynomial algebra, i.e. 
the $\hbar$-adically complete algebra generated by $x,y$ with relation $yx=(1+\hbar)xy$. 
This algebra is a quantum formal deformation of $A_0:=F[x,y]$. It has trivial center and hence is not PI. 
\end{example} 

\begin{remark} Here is a direct proof of Corollary \ref{PIdegr2}(i), bypassing localizations and formal deformations. 

Let $C$ be the center of $A$ and $C_0={\rm gr}(C)$.  
We claim that $A_0$ is algebraic over $C_0$. To show this, let $a_0\in A_0$ be 
a homogeneous element, and lift it to an element $a\in A$. Since $A$ is PI, by Posner's theorem 
it is algebraic over $C$, so there exists a nonzero $P\in C[t]$ such that $P(a)=0$. 
Taking the leading terms of this equation gives a nonzero polynomial 
$P_0\in C_0[t]$ such that $P_0(a_0)=0$, as desired. 

Now assume that $A$ is noncommutative. Let $\lbrace{,\rbrace}$ be the nonzero Poisson bracket 
on $A_0$ defined in the proof of Theorem \ref{PIdegr}(i). Given $a_0\in A_0$, the operator 
$\lbrace a_0,?\rbrace$ is a derivation of $A_0$ which vanishes on $C_0$. Since $A_0$ is algebraic over $C_0$ and ${\rm char}F=0$, this derivation vanishes, i.e., $\lbrace,\rbrace=0$, a contradiction. 

The same argument works for formal deformations (Theorem \ref{PIdegr1} when ${\rm char}F=0$). 
\end{remark}  

\begin{remark} Let us say that an algebra $A$ is locally PI if any finitely generated subalgebra of $A$ is PI. An example of such an algebra is the Weyl algebra $\bold A_{\mathcal I}(F)$  generated by $x_i,y_i$, $i\in {\mathcal{I}}$ for an infinite set ${\mathcal{I}}$ and ${\rm char}F=p>0$.  
Corollary \ref{PIdegr2} immediately implies that if $A$ is a locally PI filtered quantization 
of a commutative domain $A_0$ over $F$ then $A$ must be commutative 
if ${\rm char}(F)=0$, and the PI degree of every finitely generated subalgebra 
of $A$ is a power of $p$ if ${\rm char}(F)=p>0$. 
Thus, in the special case when $A$ is a connected Hopf algebra equipped with the coradical filtration and ${\rm char}F=0$, we recover \cite{BGZ}, Theorem 4.5. 
\end{remark}

\end{document}